\definecolor{myred}{rgb}{0.75,0,0}
\definecolor{mygreen}{rgb}{0,0.5,0}
\definecolor{myblue}{rgb}{0,0,0.65}
\newcommand{\arxiv}[1]{\href{http://arxiv.org/abs/#1}{\tt arXiv:\nolinkurl{#1}}}
\newcommand{\arXiv}[1]{\href{http://arxiv.org/abs/#1}{\tt arXiv:\nolinkurl{#1}}}
\newtheorem{theorem}{Theorem}[section]
\newtheorem{lemma}[theorem]{Lemma}
\newtheorem{proposition}[theorem]{Proposition}
\theoremstyle{remark}
\newtheorem{remark}[theorem]{Remark}
\numberwithin{equation}{section}
\newcommand{\nc}{\newcommand}
\nc{\flags}{\mathcal{F}}
\nc{\KP}{\operatorname{KP}}
\nc{\re}{re}
\def\rank{\operatorname{rank}}
\def\im{\operatorname{im}}
\def\Q{\mathbb{Q}}
\def\C{\mathbb{C}}
\def\Z{\mathbb{Z}}
\def\F{\mathbb{F}}
\def\E{\mathcal{E}}
\def\A{\mathcal{A}}
\def\B{\mathcal{C}}
\def\T{\mathcal{T}}
\def\udots{\reflectbox{$\ddots$}}
\def\P{\mathcal{E}} % notati
\def\L{\mathcal{L}}
\def\Y{\widetilde{Y}}
\nc{\co}{\nabla}
\def\a{\alpha}
\def\b{\beta}
\def\la{\lambda}
\def\Hom{\operatorname{Hom}}
  \DeclareMathOperator{\sHom}{\mathscr{H}\text{\kern -3pt {\calligra\large om}}\,}
\def\uk{\underline{k}}
\def\Mat{\operatorname{Mat}}
\def\End{\operatorname{End}}
\def\id{\mbox{id}}
\newcommand{\map}[2]{\,{:}\,#1\!\longrightarrow\!#2}
\def\inv{^{-1}}
\numberwithin{equation}{section}
\title{Non-Perverse Parity Sheaves on the Flag Variety}
\author[Peter J McNamara]{Peter J McNamara}
\email{maths@petermc.net}
\date{\today}
\begin{document}

\begin{abstract}
We give examples of non-perverse parity sheaves on Schubert varieties for all primes.
\end{abstract}

\maketitle

\section{Introduction}

The notion of a parity sheaf was introduced in \cite{parity} and has since become an important object in modular geometric representation theory. An even/odd sheaf on a complex variety $X$ with coefficients in a field $k$ is an object of $D^b_c(X;k)$\footnote{$D^b_c(X;k)$ is the bounded derived category of constructible sheaves on $X$ with coefficients in $k$. We always work with the classical metric topology on $X$.} whose star and shriek restrictions to all points only have even/odd cohomology. A parity sheaf is a direct sum of an even and an odd sheaf. (We only consider parity sheaves for the zero pariversity in this paper).

In this paper, we take $X$ to be the variety of all complete flags in $\C^n$, and only consider sheaves which are constructible with respect to the stratification by Schubert varieties. Then by \cite[Theorem 4.6]{parity}, for each $w\in S_n$, there exists a unique indecomposable parity sheaf $\P_w$ whose support is the Schubert variety $X_w$, up to an overall homological shift. Up to homological shift, these constitute all indecomposable Borel-constructible parity sheaves on the flag variety $X$.
We normalise this shift such that when restricted to the Schubert cell, $\P_w$ is the constant sheaf shifted by $\dim(X_w)$. This ensures that when the characteristic of $k$ is zero, that $\E_w$ is isomorphic to the intersection cohomology sheaf $IC(X_w;k)$. We call such parity sheaves normalised.

Let $p$ be the characteristic of $k$.
We provide the first examples of normalised parity sheaves on Schubert varieties which are not perverse for primes $p>2$. Examples for $p=2$ were recently constructed in \cite{lw}. Our family of examples also includes parity sheaves which are arbitrarily non-perverse. We are not able to provide examples with $p$ greater than the Coxeter number, but we expect that such examples exist.
One geometric consequence is that
by Theorem \ref{uniqueness}, the non-perverseness of these sheaves proves that these Schubert varieties do not have any semi-small resolutions.
 Examples of non-perverse parity sheaves are also of representation-theoretic interest thanks to \cite{acharriche}.

Our examples generalise constructions of Kashiwara and Saito \cite{kashiwarasaito}, Polo (unpublished), and the author and Williamson \cite{tametorsion}.
We have phrased things in terms of parity sheaves as that is the more traditional formulation, although the construction of Theorem \ref{uniqueness} is more general. Despite this more general construction,  we rely on some of the theory of parity sheaves in our proof.

We would like to thank an anonymous referee whose diligent reading of this manuscript has resulted in a much improved exposition.

\section{Statement of the result}

Let $p$ be a prime. Let $d$ and $l$ be positive integers such that $p^d\geq l\geq 3$. Let $q=p^d$. 
Define the following permutation $y\in S_{q(l+2)}$:

\[
y(j)=\begin{cases}
(l+1)q &\text{if } j=1 \\
q+2-j &\text{if } 2\leq j\leq q \\
(l+2)q &\text{if } j=q+1 \\
(l+2)q+1-j &\text{if } q+2\leq j<(l+1)q \text{ and } j\not\equiv0,1\pmod q \\
(l+2)q-j &\text{if } q+2\leq j<(l+1)q \text{ and } j\equiv 0\pmod q \\
(l+2)q+2-j &\text{if } q+2\leq j<(l+1)q \text{ and } j\equiv 1\pmod q \\
1 &\text{if } j=(l+1)q \\
(2l+3)q-j &\text{if } (l+1)q<j<(l+2)q \\
q+1 &\text{if } j=(l+2)q.
\end{cases}
\]

Let $\P_y$ be the indecomposable parity sheaf supported on the Schubert variety $X_y$ with coefficients in $\F_p$, extending the constant sheaf shifted by $\dim(X_y)$. Our theorem is:

\begin{theorem}\label{main}
 \[
 \P_y\not\cong {^p}\tau_{\leq l-3}(\P_y).
 \]
\end{theorem}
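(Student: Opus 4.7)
The statement $\P_y\not\cong{}^p\tau_{\le l-3}(\P_y)$ is equivalent to ${}^pH^i(\P_y)\ne 0$ for some $i>l-3$. Since $\P_y$ is Schubert-constructible, this can be certified by exhibiting a Schubert point $x\le y$ and an integer $j$ with $H^j(i_x^*\P_y)\ne 0$ and $j+\dim X_x>l-3$, i.e.\ a stalk class beating the perversity support bound at $x$ by more than $l-3$.

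The plan is to follow the strategy used in \cite{kashiwarasaito} and \cite{tametorsion}. First I would fix a reduced expression $\underline{w}$ for $y$ adapted to the three blocks visible in its definition: the reversal on positions $\{2,\ldots,q\}$, the long antidiagonal middle block on $\{q+2,\ldots,(l+1)q\}$, and the trailing block on $\{(l+1)q+1,\ldots,(l+2)q\}$. Form the Bott--Samelson resolution $\pi\colon BS(\underline{w})\to X_y$. The pushforward $\pi_*\underline{\F_p}[\dim X_y]$ is a parity complex, so by \cite[Theorem 4.6]{parity} it decomposes as a direct sum of shifted indecomposable parity sheaves, in which $\P_y$ occurs exactly once and without shift.

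Next I would fix $x$ to be a point in a small Schubert cell inside $X_y$, the most singular ``corner'' of the construction, and compute the fiber cohomology $H^*(\pi^{-1}(x);\F_p)$ as the cohomology of an iterated tower of $\mathbb{P}^{q-1}$-bundles assembled from the subwords of $\underline{w}$. The height of this tower reflects the $l$ stacked middle blocks of $y$, and the point of the construction is that one can isolate a class living in a degree $j$ for which $j+\dim X_x$ grows linearly with $l$, well past the threshold $l-3$.

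The principal obstacle is to certify that this class is not absorbed by the other summands $\P_w[n_w]$ with $w<y$ appearing in the decomposition. I would handle this by induction on the Bruhat order: the explicit structure of $y$, in particular the modular arithmetic clauses governing the cases $j\equiv 0,1\pmod q$, is tailored so that any $w<y$ which could a priori contribute a stalk class in the offending degree has its Schubert variety of strictly smaller dimension than required, via a pattern-avoidance analysis on the one-line notation of $y$. Hence the class must be carried by $\P_y$, which yields the required non-vanishing of ${}^pH^i(\P_y)$ in a degree $i>l-3$; as $l$ varies one obtains parity sheaves that are arbitrarily non-perverse.
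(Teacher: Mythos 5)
Your reduction to exhibiting a stalk class with $H^j(i_x^*\P_y)\neq 0$ and $j+\dim X_x>l-3$ is correct, and the general shape of your plan (an even resolution, the decomposition into indecomposable parity summands, locating a class in fibre cohomology) is the right family of techniques. But the core of the argument is missing, and the step you flag as ``the principal obstacle'' is addressed by a mechanism that cannot work. The summands that threaten to absorb your fibre class are not parity sheaves on intermediate Schubert varieties of the wrong dimension --- they are shifted skyscrapers supported on the stratum of $x$ itself, and such summands genuinely do occur in the decomposition. Whether the class is absorbed is exactly the question of the multiplicity $m(i_{x*}\uk[l-2],\pi_*\uk[n])$, which by \cite[Proposition 3.2]{parity} equals the rank of an intersection form on $H^*(\pi^{-1}(x))$, namely cup product against the Euler class of the normal bundle of the fibre. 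No dimension count or pattern-avoidance analysis can settle this, because any such argument is blind to the coefficient field, while the statement is characteristic-dependent: over $\Q$ the parity sheaf is the intersection cohomology sheaf and is perverse, so a characteristic-independent absorption argument would ``prove'' a false statement.

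The paper's proof runs precisely through this intersection form. It passes to a transverse slice $Y$ at a carefully chosen point $x$, resolves $Y$ by the total space of an explicit vector bundle over $(\mathbb{P}^{q-1})^{l+2}$, computes the Euler class, and finds that the resulting Gram matrix $M$ has rank $q-l+1$ over $\Q$ but $q-l$ over $\F_p$; the modular input is the divisibility $(1+x)^l\mid x^q+1$ over $\F_p$, which is where the hypothesis $q=p^d\geq l$ is used. This rank drop forces one $\Z_p$-summand of $\pi_*\underline{\Z}_p[n]$ to contain the skyscraper $i_{x*}\underline{\Q}_p[l-2]$ rationally without splitting it off mod $p$; an $S_l$-symmetry argument then rules out the two invariant intermediate strata, so that summand must be the restriction of $\P_y$ to the slice, whose stalk at $x$ therefore carries a free $\Z_p$ in degree $l-2$. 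Your proposal never identifies where the prime enters the argument, and without the intersection-form computation (or some equivalent modular input) it cannot be completed.
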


Here ${^p}\tau_{\leq l-3}$ is the perverse truncation operator. Since $l\geq 3$, this implies that $\P_y$ is not perverse.

\section{Intersection Forms}

%We now discuss the strategy of the proof. Given a proper even resolution $\pi\map{\tilde{Y}}{Y}$, the parity sheaf $\P_Y$ is a direct summand of $\pi_*\uk_{\tilde{Y}}$ with multiplicity one. The multiplicity of the other parity summands are computed by the rank of an intersection pairing.

If $A$ is an indecomposable object in a Krull-Schmidt category and $X$ is any object, write $m(A,X)$ for the number of times $A$ appears as a direct summand of $X$.

Our main tool is the following result which computes the multiplicities of a direct summand via the rank of a bilinear form.

\begin{proposition}\cite[Proposition 3.2]{parity}\label{pairingprop}
Let $k$ be a local ring. Let $\pi\map{\widetilde{Y}}{Y}$ be a proper resolution of singularities. Let $y\in Y$, and suppose that the fibre $F=\pi\inv(y)$ is smooth. Write $i$ for the inclusion of $y$ in $Y$. Let $n$ be the dimension of $\Y$, $d$ the dimension of $F$ and $m$ be an integer. Let $B$ be the pairing
\begin{equation}\label{tool}
H^{2d-n-m}(F)\times H^{2d-n+m}(F) \to H^{2d}(F)
\end{equation}
given by $B(\a,\b)=\a\cup\b\cup e$, where $e$ is the Euler class of the normal bundle to $F$ in $Y$. Then
\[
m(i_*\uk[m],\pi_*\uk[n])=\rank(B).
\]
\end{proposition}

\begin{proof}
By general results about multiplicities of indecomposable objects in Krull-Schmidt categories, the multiplicity $m(i_*\uk[m],\pi_*\uk[n])$ is equal to the rank of the pairing
\begin{equation}\label{pairing}
\Hom(i_*\uk[m],\pi_*\uk_{\Y}[n])\times \Hom(\pi_*\uk_{\Y}[n],i_*\uk[m])\to \End(i_*\uk[m])\cong k.
\end{equation}

The following commutative diagram appears in the proof of \cite[Lemma 3.4]{parity} and arises through applying base change and adjunctions. The pairing $B'$ arises through composition with the canonical map from $\omega_F[-n]$ to $\uk_F[n]$ (which comes from applying the canonical natural transformation $i^!\to i^*$ to $\uk_{\Y}$), while the pairing (\ref{pairing}) appears along the bottom row.

\[
{
\xymatrix@C=0.3cm@R=0.5cm{
\Hom( \uk_{F}[m], \omega_F[-n]) \times
\Hom(\uk_F[n], \omega_F[m]) \ar[r]^(.66){B'} \ar[d]^\cong
& \Hom(\uk_F[m],\omega_F[m]) \ar[d]^\cong \\
\Hom(i_*\uk[m],\pi_*\uk_{\Y}[n])\times \Hom(\pi_*\uk_{\Y}[n],i_*\uk[m])
\ar[r] & \End(i_*
\uk[m])}
}\]

Since $F$ is smooth, $\omega_F\cong \uk_F[2d]$ and the canonical morphism from $\omega_F[-n]$ to $\uk_F[n]$ is identified with the Euler class $e$. Therefore the pairing (\ref{pairing}) is identified with the one stated in the proposition, completing the proof.
\end{proof}

\section{Geometry}\label{sec:geometry}

Let $x$ be the permutation
\[
 \left(\begin{array}{cccccc}
     J & 0 & \cdots & 0 & 0  \\
      0 & 0 & \cdots & J & 0\\
      \vdots & \vdots & \udots & \vdots & \vdots  \\
      0 & J & \cdots & 0 & 0  \\
      0 & 0 & \cdots & 0 & J  \\
      \end{array} \right)
\]
where $J$ is the $q\times q$ antidiagonal matrix.

We compute the slice to the Schubert variety $X_x$ in $X_y$ through the point $x$. Using the techniques discussed in the proof of \cite[Proposition 3.2]{cell}, this is given by matrices of the form

\[
 \left(\begin{array}{cccccc}
     J & 0 & 0 & \cdots & 0 & 0 \\
      A_l  & 0 & 0 & \cdots & J & 0 \\
      \vdots & \vdots & \vdots & \udots & \vdots & \vdots \\
      A_2 & 0 & J & \cdots & 0 & 0 \\
      A_1 & J & 0 & \cdots & 0 & 0 \\
      0 & B_1 & B_2 & \cdots & B_l & J
      \end{array} \right)
\]

subject to the conditions
\begin{equation}\label{y}
 B_i J A_i=0, \quad \rank\begin{pmatrix}
 A_l \\
 \vdots \\
 A_1
 \end{pmatrix}\leq 1, \quad \rank\begin{pmatrix}
 B_1 & \cdots & B_l
 \end{pmatrix}\leq 1
\end{equation}
for all $i$. Here, each $A_i$ and $B_i$ is a $q\times q$ matrix.
Let $Y$ be this slice. So $Y$ is the space of all $(2l)$-tuples of $q\times q$ matrices $A_1,\ldots,A_l,B_1,\ldots B_l$ subject to the conditions (\ref{y}) above.

Let \begin{equation*}\label{defn:ytilde}
 \begin{split}
 \Y=\{ (h,\ell_1,\ldots,\ell_l,\ell,A_1,\ldots,A_l,C_1,\ldots,C_l)\mid h\in Gr(q-1,q), \\ \ell_1,\dots,\ell_l,\ell\in Gr(1,q), A_i\in \Hom(\C^q/h,\ell_i),B_i\in \Hom(\C^q/\ell_i,\ell) \}  
 \end{split}
\end{equation*}
and $\pi\map{\Y}{Y}$ be defined by
\begin{equation}\label{defn:pi}
\pi(h,\ell_1,\ldots,\ell_l,\ell,A_1,\ldots,A_l,C_1,\ldots,C_l)=(A_1,\ldots,A_l,C_1J,\ldots,C_lJ).
\end{equation}

This is a resolution of singularities of $Y$ and $\Y$ is the total space of a vector bundle $\E$ on $Z:=(\mathbb{P}^{q-1})^{l+2}$. 

We have
\[
 H^*(Z) \cong \Z[w]/(w^q)\otimes ( \bigotimes_{i=1}^l \Z[a_i]/(a_i^q) )\otimes \Z[z]/(z^q)
\]
where the variables $w$ and $z$ come from the factors of $Z$ corresponding to the choices of $h$ and $\ell$ in $\Y$ respectively, while the $a_i$ come from the choice of $\ell_i$.

\begin{lemma}
The Euler class of $\E$ is given by
\begin{equation}\label{eofe}
 e(\E) =\prod_{i=1}^l \left((a_i+w)\sum_{j=0}^{q-1}a_i^j z^{q-1-j}\right).
\end{equation}
\end{lemma}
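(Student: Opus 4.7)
The plan is to split $\E$ as a direct sum of sub-bundles, compute the Euler class of each summand, and multiply. Explicitly, $\E = \bigoplus_{i=1}^{l}(\mathcal{A}_i \oplus \mathcal{B}_i)$ where $\mathcal{A}_i := \Hom(\C^q/h, \ell_i)$ is a line bundle and $\mathcal{B}_i := \Hom(\C^q/\ell_i, \ell)$ has rank $q-1$. Multiplicativity of the Euler class on direct sums gives $e(\E) = \prod_{i=1}^{l} e(\mathcal{A}_i) \cdot e(\mathcal{B}_i)$, so it suffices to compute each factor.

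For the line bundle $\mathcal{A}_i = (\C^q/h)^* \otimes \ell_i$, the Euler class is simply the first Chern class $c_1((\C^q/h)^*) + c_1(\ell_i)$. Under the conventions implicit in the cohomology description of $Z$ (fixing $w$ and $a_i$ as the appropriate degree-one generators coming from the $h$- and $\ell_i$-factors), this equals $a_i + w$.

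For $\mathcal{B}_i = (\C^q/\ell_i)^* \otimes \ell$, I would first compute the Chern classes of $(\C^q/\ell_i)^*$. From the tautological sequence $0 \to \ell_i \to \C^q \to \C^q/\ell_i \to 0$ one gets $c(\C^q/\ell_i) = 1/c(\ell_i)$, and reducing modulo $a_i^q = 0$ then dualizing yields $c_j((\C^q/\ell_i)^*) = a_i^j$ for $0 \le j \le q-1$. Letting $\beta_1, \ldots, \beta_{q-1}$ be formal Chern roots of $(\C^q/\ell_i)^*$, the splitting principle then gives
\[
e(\mathcal{B}_i) = \prod_{k=1}^{q-1}(\beta_k + z) = \sum_{j=0}^{q-1} z^{q-1-j} e_j(\beta_1,\ldots,\beta_{q-1}) = \sum_{j=0}^{q-1} a_i^j z^{q-1-j},
\]
since the $j$-th elementary symmetric function in the Chern roots of a vector bundle is its $j$-th Chern class. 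Substituting these computations into the product yields (\ref{eofe}). The only real subtlety is pinning down the sign conventions for the generators $w, z, a_i$ so that everything appears with plus signs; since we work over $\F_p$ and $q = p^d$, any residual sign discrepancies are benign.
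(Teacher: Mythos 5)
Your proof is correct and follows essentially the same route as the paper: decompose $\E$ into the line bundles $\A_i$ and the rank-$(q-1)$ bundles $\B_i$, compute $c_1(\A_i)=a_i+w$, get $c_j((\C^q/\ell_i)^*)=a_i^j$ from the tautological sequence, and evaluate $e(\B_i)$ via the splitting-principle identity $e(V\otimes L)=\sum_j c_j(V)c_1(L)^{n-j}$ (which the paper states as a displayed formula and you rederive inline with Chern roots). Your closing remark on sign conventions is the right thing to flag, though the cleaner justification is that the generators $w$, $a_i$, $z$ can simply be chosen as $c_1$ of the appropriate (dual) tautological bundles so that all signs are plus, independent of the coefficient ring.
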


\begin{proof}
The bundle $\E$ is naturally a direct sum of $2l$ vector bundles
\[
\E\cong \bigoplus_{i=1}^l \A_i \oplus \bigoplus_{i=1}^l \mathcal{C}_i,
\]
where the bundles $\A_i$ and $\mathcal{C}_i$ correspond to the choice of $A_i$ and $C_i$ in $\Y$. The bundle $\A_i$ is a line bundle with first Chern class $a_i+w$. 

On $\mathbb{P}^{q-1}$, write $\L$ for the tautological line bundle, and $\T$ for the trivial vector bundle of rank $q$. Then, restricted to the appropriate $\mathbb{P}^{q-1}\times \mathbb{P}^{q-1}$, we have
\[
\B_i\cong  p_1^*(\T/\L)^* \otimes p_2^* \L.
\]

The total Chern class of $(\T/\L)^*$ is given by
\[
c((\T/\L)^*)=\sum_{j=1}^\infty a_i^j.
\]

The computation of $e(\mathcal{C}_i)$ proceeds via (\ref{chern}) below, whose proof follows easily using the splitting principle:

Let $V$ be a rank $n$ vector bundle and $L$ be a line bundle. Then
\begin{equation}\label{chern}
e(V\otimes L)=\sum_{i=0}^n c_i(V) c_1(L)^{n-i}.
\end{equation}

Using this, we obtain
\[
e(\B_i)=\sum_{j=0}^{q-1}a_i^j z^{q-1-j}.
\]

Together with the formula for the Euler class (=first Chern class) of $\A_i$, this implies the formula in the lemma.
\end{proof}

Motivated by Proposition \ref{pairingprop}, we consider the pairing
\[
 \langle \cdot,\cdot\rangle \map{H^{2(q-l)}(Z)\times H^{2(q-2)}(Z)}{H^{2(l+2)(q-1)}(Z)}
\]
given by
\[
 \langle \sigma,\tau\rangle = \sigma \cup \tau \cup e(\E).
\]

The ring $H^*(Z)$ has a basis consisting of monomials in the variables $w,a_i,z$, where the exponent of each variable is less than $q$. 
When computing the intersection pairing $\langle \cdot ,\cdot\rangle$ between two of these monomials in $H^*(Z)$, one notices that it only depends on the power of $w$ in their product.
If this power is $j$, then the pairing takes the value ${l \choose q-1-j}$, for this is the number of ways of choosing terms in the product formula (\ref{eofe}) for $e(\E)$ which produce the right power of $w$, and once these choices are made, the rest are uniquely determined.

Deleting duplicate rows that are irrelevant for computing the rank, the matrix for our intersection form with respect to the monomial basis becomes

\begin{equation}\label{matrix}
M=
 \left(\begin{array}{cccccccccccc}
    l & {l \choose 2} & {l \choose 3} & \cdots & & l & 1 & & & & \\
    1 & l & {l \choose 2} & {l \choose 3} & \cdots  & & l & 1 & & & \\
    & 1 & l & {l \choose 2} & & & & \ddots & \ddots & & \\
    & & \ddots & \ddots & \ddots & & & & l & 1 & \\
    & & & 1 & l & {l \choose 2} & \cdots & && l & 1 \\
    & & & & 1 & l & {l \choose 2} & \cdots && & l
    \end{array} \right)
\end{equation}
% 
%  \[
%   \begin{pmatrix}
% l & {l \choose 2} & {l \choose 3} & \cdots & & & l & 1 & & & & \\
%     1 & l & {l \choose 2} & {k \choose 3} & \cdots & & & l & 1 & & & \\
%     & 1 & l & {l \choose 2} & & & & & \ddots & \ddots & & \\
%     & & \ddots & \ddots & \ddots & & & & & l & 1 & \\
%          \end{pmatrix}
%  \]

This matrix has $q-1$ columns, corresponding to the powers of $w$ between $0$ and $q-2$ inclusive that appear in $H^{2(q-2)}(Z)$ and $q-l+1$ rows, corresponding to the powers of $w$ between $0$ and $q-l$ inclusive that appear in $H^{2(q-l)}(Z)$.

\section{Reduction to the slice}

This section is likely to be of independent interest. We require the following result about direct summands of a pushforward sheaf. The same proof works when $k$ is a local ring, but for simplicity we assume $k$ is a field.

\begin{theorem}\label{uniqueness}
Let $X$ be an irreducible complex analytic space. 
Let $k$ be a field. Then there exists a unique indecomposable object $\E(X;k)\in D^b_c(X;k)$ which restricts to the constant sheaf shifted by $\dim X$ on an open subset of $X$, and is a direct summand of $\sigma_*\uk_Y[\dim X]$ for any proper resolution of singularities $\sigma:Y\to X$.
\end{theorem}

\begin{proof}
Let $\sigma:Y\to X$ and $\pi:Z\to X$ be two proper resolutions of singularities. 
Consider
\(
\Hom(\sigma_*\uk_Y,\pi_*\uk_Z)\cong H^{BM}_{2d}(Y\times_X Z)
\), where $H^{BM}_*$ stands for Borel-Moore homology and $d=\dim_\C(X)$.
Let $U$ be a connected open dense subset of $X$ such that $\sigma$ and $\pi$ are isomorphisms over $U$. Write $j:U\to X$ for the inclusion. 
As $\sigma$ and $\pi$ are isomorphisms over $U$, there is a canonical inclusion of $U$ into $Y\times_X Z$. The closure $\overline U$ of $U$ in $Y\times_XZ$ is an irreducible component of $Y\times_XZ$, hence defines a fundamental class\footnote{The definition of fundamental class is the same as in the algebraic case \cite[\S 2.6.12]{chrissginzburg}, using the fact that the singular locus is of real codimension at least two.} $\a:=[\overline{U}]\in H^{BM}_{2d}(Y\times_XZ)\cong \Hom(\sigma_*\uk_Y,\pi_*\uk_Z)$. This class satisfies $j^*\a=\id\in \Hom(\uk_U,\uk_U)$.

Similarly we define $\b\in \Hom(\pi_*\uk_Z,\sigma_*\uk_Y)$ with $j^*\b=\id\in\Hom(\uk_U,\uk_U)$.

The category $D^b_c(X;k)$ is Krull-Schmidt. Therefore, since $\uk_U$ is indecomposable and $j^*(\sigma_*\uk_Y)\cong \uk_U$, we can write $\sigma_*\uk_Y\cong A\oplus C$ where $A$ is indecomposable and $j^*C=0$. Similarly we have $\pi_*\uk_Z\cong B\oplus D$ with $B$ indecomposable and $j^*D=0$.
 Using these direct sum decompositions, consider the components of $\a$ and $\b$. They induce morphisms which by abuse of notation we call $\a:A\to B$ and $\b:B\to A$.

Consider $\beta\alpha\in \End(A)$. Under the ring homomorphism $j^*:\End(A)\to \End(\uk_U)\cong \uk$, we have $j^*(\b\a)=1$. As $A$ is indecomposable, $\End(A)$ is local. This implies that $\b\a$ is a unit in $\End(A)$. Similarly $\a\b$ is a unit in $\End(B)$. 
We thus have two indecomposable objects $A$ and $B$, with morphisms between them $\a$ and $\b$, whose compositions in each direction are isomorphisms. This is enough to conclude $A\cong B$. This completes the proof, with $\E(X;k)=A[\dim X]$. Note that $\E(X;k)$ always exists because a resolution of singularities always exists.
\end{proof}

\begin{remark}
For a Schubert variety $X_y$, the sheaf $\E(X_y;\F_p)$ is the same as the parity sheaf $\E_y$, by considering a Bott-Samelson resolution of $X_y$ \cite[Prop 4.11]{parity}.
\end{remark}

Now we show how to use this result to restrict our attention to slices. Let $X$ be an irreducible complex analytic space. Let $Z\subset X$ be a closed analytic subset. Let $Y$ be a slice to $Z$ in $X$. This implies that there exists an open subset $U\subset X$ and smooth $V$ such that $U\cong Y\times V$ and under this isomorphism the copy of $Y$ in $X$ gets sent to $Y\times \{v\}$ for some $v\in V$.

Let $Y'\to Y$ be a resolution of $Y$. Then $Y'\times V\to U$ is a resolution of $U$. The sheaf $\E(Y;k)\boxtimes\uk_V[\dim V]$ on $U$ is an indecomposable direct summand of the pushforward of the shifted constant sheaf under this resolution which is generically constant, hence by the above theorem is isomorphic to $\E(U;k)$.

Let $j\map{U}{X}$ denote the inclusion. Pulling back a resolution of $X$ to $U$ via $j$ shows that $\E(U;k)$ is a direct summand of $j^\ast \E(X;k)$.

Therefore $\E(Y;k)\boxtimes \uk_V[\dim V]$ is a direct summand of $j^\ast \E(X;k)$. So if we can show that $\E(Y;k)\not\cong{}^p\tau_{\leq l-3}(\E(Y;k))$, then that will imply that $\E(X;k)\not\cong{}^p\tau_{\leq l-3}(\E(X;k))$. This is the method by which we can restrict our attention to the slice.

\section{Fin}

\begin{lemma}\label{rank}
 The matrix $M$ from (\ref{matrix}) has rank $q-l+1$ over $\Q$ and $q-l$ over $\F_p$.
\end{lemma}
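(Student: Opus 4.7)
The plan is to reinterpret $M$ as the matrix of multiplication by $(1+x)^l$ on a suitable space of polynomials, and to compute $\ker M^T$ directly.

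Since $M_{ij} = \binom{l}{j-i+1}$ is the coefficient of $x^{j-i+1}$ in $(1+x)^l$, I would identify $v = (v_1,\ldots,v_{q-l+1})$ with the polynomial $V(x) = \sum_{i=1}^{q-l+1} v_i x^{i-1}$ of degree at most $q-l$. A direct expansion shows that the coefficient of $x^j$ in $(1+x)^l V(x)$ equals $(M^T v)_j$ for $j = 1,\ldots,q-1$, while the constant term and the coefficient of $x^q$ are $v_1$ and $v_{q-l+1}$ respectively. Hence $v \in \ker M^T$ if and only if $(1+x)^l V(x) = a + bx^q$ for some scalars $a,b$.

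Over $\Q$, the condition $(1+x)^l \mid a + bx^q$ forces $-1$ to be a root of $a+bx^q$ with multiplicity at least $l \geq 3$. Differentiating once and evaluating at $x=-1$ gives $bq(-1)^{q-1}$; since $q = p^d \neq 0$ in $\Q$, this forces $b=0$, and then $a=0$ follows from $a + b(-1)^q = 0$. So $\ker M^T = 0$ and $\rank_\Q M = q-l+1$.

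Over $\F_p$, I would exploit the Frobenius identity $(1+x)^q = 1 + x^q$ in $\F_p[x]$ to rewrite $a + bx^q = (a-b) + b(1+x)^q$. Since $l \leq q$, the term $b(1+x)^q$ is automatically divisible by $(1+x)^l$, so divisibility of the whole expression reduces to $(1+x)^l \mid (a-b)$, forcing $a = b$. Taking $a = b = 1$ yields $V(x) = (1+x)^{q-l}$, which has degree $q-l$ and lies in the allowed space; hence $\ker M^T$ is one-dimensional and $\rank_{\F_p} M = q-l$. The main obstacle is spotting the polynomial-multiplication reformulation at the outset; once it is in place, the rank drop is forced purely by Frobenius, which cleanly explains why it is invisible over $\Q$.
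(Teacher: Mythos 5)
Your proposal is correct and is essentially the paper's argument viewed through the transpose: identifying a dependence among the rows (equivalently, an element of $\ker M^T$) with a polynomial $V$ of degree at most $q-l$ satisfying $(1+x)^l V(x)=a+bx^q$, then ruling this out over $\Q$ and exhibiting exactly one such relation over $\F_p$ via $(1+x)^q=1+x^q$. If anything, your explicit determination of the kernel as the span of $(1+x)^{q-l}$ over $\F_p$ is tidier than the paper's closing remark about the lower bound on the rank.
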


\begin{proof}
 Identify the rows of $M$ with the sequence of polynomials
 \begin{align*}
  &(1+x)^l-1 \\
  x&(1+x)^l \\
  x^2&(1+x)^l \\
  & \vdots \\
  x^{q-l-1}&(1+x)^l \\
  x^{q-l}&(1+x)^l -x^q.
 \end{align*} 
If there is a linear dependence, then $A+Bx^q$ is divisible by $(1+x)^l$ for some constants $A$ and $B$. Over $\Q$, this is impossible since $A+Bx^q$ has distinct roots over $\C$. This computes the rank over $\Q$.

Over $\F_p$, the polynomial $(1+x)^l$ divides $x^q+1$, which easily leads to a linear dependence amongst the rows of $M$. It is obvious that the rank is at least $q-l$, completing the proof in this case.
\end{proof}

Recall the resolution $\pi\map{\Y}{Y}$ defined in \S \ref{sec:geometry}. Let $Z=\Y\times_Y\Y.$

\begin{lemma}\label{lem:even}
The Borel-Moore homology groups $H_i^{BM}(Z;\Z)$ are free over $\Z$ and vanish when $i$ is odd.
\end{lemma}

\begin{proof}

The variety $Z$ is
\begin{multline*}
Z=\{(A_1,\ldots,A_l,B_1,\ldots,B_l,h,h',\ell_1,\ldots,\ell_l,\ell_1'\ldots\ell'_l,\ell,\ell')\mid 
 \\
 A_i,B_i\in \Hom(\C^q,\C^q);\  h,h'\in Gr(q-1,q);\  \ell_1,\ldots,\ell_l,\ell_1'\ldots\ell'_l,\ell,\ell'\in Gr(1,q);
 \\ h,h'\subset \ker(A_i);\   \im(A_i)\subset \ell_i,\ell_i'\subset \ker B_i;\  \im(B_i)\subset \ell,\ell'
%  h'\subset \ker(A_i), \im(A_i)\subset \ell_i'\subset \ker B_i, \im(B_i \subset \ell'
\}.
\end{multline*}

We now construct a stratification of $Z$. For each $I\subset \{1,2,\ldots,l\}\sqcup\{s,t\}$, we define a stratum $Z_I$ consisting of tuples $(A_1,\ldots,A_l,B_1,\ldots,B_l,h,h',\ell_1,\ldots,\ell_l,\ell_1'\ldots\ell'_l,\ell,\ell')\in Z$ subject to the conditions $\ell_i=\ell_i'$ if $i\in I$, $h=h'$ if $s\in I$, $\ell=\ell'$ if $t\in I$, $\ell_i\neq\ell_i'$ if $i\notin I$, $h\neq h'$ if $s\notin I$ and $\ell\neq\ell'$ if $t\notin I$.

Each stratum $Z_I$ is a vector bundle over a product of spaces that are either $\mathbb{P}^{q-1}$ or $(\mathbb{P}^{q-1}\times\mathbb{P}^{q-1})\setminus\Delta$, where $\Delta$ is the diagonal. Therefore $H_i^{BM}(Z_I;\Z)$ is free over $\Z$ and vanishes when $i$ is odd. Since the $Z_I$ stratify $Z$, the same is true for the Borel-Moore homology of $Z$.
\end{proof}

Reimagine $Y$ as a subspace of the space of representations of the following quiver $Q$, where there are $l$ vertices in the central column.
%%
%%\begin{center}
%%\begin{tikzpicture}[scale=2]
%%\draw[->] (0,0.5) -- (1,1);
%%\draw[->] (0,0.5) -- (1,.8);
%%\draw[->] (0,0.5) -- (1,0);
%%\draw[->] (1,.8) -- (2,.5);
%%\draw[->] (1,1) -- (2,.5);
%%\draw[->] (1,0) -- (2,.5);
%%\node[draw] at (1,.4) (a) {\vdots};
%%\end{tikzpicture}
%%\end{center}
%
\begin{center}
\begin{tikzpicture}[scale=1]
\tikzset{dots/.style 2 args={decorate,decoration=
         {shape backgrounds,shape=circle,shape size=#1,shape sep=#2}}} % dots                             
\pgfmathsetmacro{\radius}{0.06} % radius of nodes of quiver
\pgfmathsetmacro{\width}{2.5}   % (half) the width of the quiver
\pgfmathsetmacro{\height}{1}    % (half) the height of the quiver  
\pgfmathsetmacro{\gap}{0.4}     % gap between the n nodes in the centre
\pgfmathsetmacro{\arrowsize}{1.1mm} % size of the arrow head

\draw[-{Latex[length=\arrowsize]}] (-\width+0.15,0.05)--(-0.15,\height-0.02);
\draw[-{Latex[length=\arrowsize]}] (-\width+0.15,0.01)--(-0.15,\height-\gap);
\draw[-{Latex[length=\arrowsize]}] (-\width+0.15,-0.05)--(-0.15,-\height+0.02);
\draw[-{Latex[length=\arrowsize]}] (0.15,\height-0.02)--(\width-0.15,0.05);
\draw[-{Latex[length=\arrowsize]}] (0.15,\height-\gap)--(\width-0.15,0.01);
\draw[-{Latex[length=\arrowsize]}] (0.15,-\height+0.02)--(\width-0.15,-0.05);
\draw[fill=blue] (-\width,0) circle (\radius);
\draw[fill=blue] (\width,0) circle (\radius);
\draw[fill=blue] (0,\height) circle (\radius);
\draw[fill=blue] (0,\height-\gap) circle (\radius);
\draw[fill=blue] (0,-\height) circle (\radius);
\draw[dots={0.2mm}{1.2mm},fill] (0,\height-2*\gap)--(0,\gap-\height); 
\end{tikzpicture}
\end{center}
The representations have dimension vector
\begin{center}
\begin{tikzpicture}[scale=1]
\tikzset{dots/.style 2 args={decorate,decoration=
         {shape backgrounds,shape=circle,shape size=#1,shape sep=#2}}} % dots                             
\pgfmathsetmacro{\radius}{0.06} % radius of nodes of quiver
\pgfmathsetmacro{\width}{2.5}   % (half) the width of the quiver
\pgfmathsetmacro{\height}{1}    % (half) the height of the quiver  
\pgfmathsetmacro{\gap}{0.4}     % gap between the n nodes in the centre
\pgfmathsetmacro{\arrowsize}{1.1mm} % size of the arrow head

\draw[-{Latex[length=\arrowsize]}] (-\width+0.15,0.05)--(-0.15,\height-0.02);
\draw[-{Latex[length=\arrowsize]}] (-\width+0.15,0.01)--(-0.15,\height-\gap);
\draw[-{Latex[length=\arrowsize]}] (-\width+0.15,-0.05)--(-0.15,-\height+0.02);
\draw[-{Latex[length=\arrowsize]}] (0.15,\height-0.02)--(\width-0.15,0.05);
\draw[-{Latex[length=\arrowsize]}] (0.15,\height-\gap)--(\width-0.15,0.01);
\draw[-{Latex[length=\arrowsize]}] (0.15,-\height+0.02)--(\width-0.15,-0.05);
\node at (-\width,0) {$q$};
\node at  (\width,0) {$q$};
\node at (0,\height) {$q$};
\node at (0,\height-\gap) {$q$};
\node at (0,-\height) {$q$};
\draw[dots={0.2mm}{1.2mm},fill] (0,\height-2*\gap)--(0,\gap-\height); 
\end{tikzpicture}
\end{center}
The group $G=GL_q(\C)^{l+2}$ acts on $Y$ in a manner such that $[Y/G]$ is a substack of the moduli stack of representations of $Q$. 

\begin{lemma}\label{yparity}
The map $\pi\map{\Y}{Y}$ defined in (\ref{defn:pi}) is a proper even\footnote{The definition of an even morphism is given in \cite[Definition 2.33]{parity}. A sufficient condition that implies evenness is that each fibre is equivariantly simply connected with no odd cohomology.} $G$-equivariant resolution of singularities, stratified with respect to a stratification $Y=\sqcup Y_\la$ such that $H^*_G(Y_\la;\mathcal{L})$ vanishes in odd degrees for all $G$-equivariant local systems $\mathcal{L}$ on $Y_\la$.
\end{lemma}

\begin{remark}
This lemma allows us to have access to the parity sheaf machinery from \cite{parity} for $G$-equivariant sheaves on $Y$. 
\end{remark}
\begin{proof}
Let $I$ and $J$ be two subsets of $\{1,2,\ldots,l\}$. Let
\begin{equation}\label{def:yij}
Y_{I,J}=\{(A_1,\ldots,A_l,B_1,\ldots,B_l)\in Y\mid A_i=0 \iff i\in I,\  B_j=0\iff j\in J\}.
\end{equation}
This is the stratification of $Y$ into $G$-orbits. We will show that this gives the desired stratification of $Y$. In the moduli interpretation of $Y$, inside each $G$-orbit there is a unique representation $M_{IJ}$ of $Q$, up to isomorphism.

The module $M_{IJ}$ decomposes as a direct sum $M_{IJ}=X_I\oplus Y_J\oplus Z$, where $X_I$ is an indecomposable with dimension 1 at the leftmost vertex, $Y_J$ is an indecomposable with dimension 1 at the rightmost vertex, and $Z$ is a direct sum of simple modules, except when $I=\{1,2,\ldots ,l\}$ or $J=\{1,2,\ldots,l\}$, when $X_I$ or $Y_J$ respectively do not appear in the decomposition.

Put an order on these indecomposables where the simple at the leftmost vertex comes earliest in the order, then $X_I$, then the simples at the middle vertices, then $Y_J$, then the simple at the rightmost vertex. With this ordering, we can decompose $M_{IJ}$ into indecomposables, $M_{IJ}=\oplus_{i=1}^m M_i^{\oplus n_i}$ where $\Hom_Q(M_i,M_j)=0$ if $i<j$.

Therefore $\End_Q(M_{IJ})^\times$ surjects onto $\prod_i \Mat_{n_i}(\End_Q(M_i))^\times$ with unipotent kernel. Each $\End_Q(M_i)$ is isomorphic to $\C$.

The quotient stack $[Y_{IJ}/G]$ is isomorphic to $[\mathrm{pt}/\End_Q(M_{IJ})^\times]$. This shows that $H^*_G(Y_{IJ};\mathcal{L})$ is a free $k$-module and vanishes in odd degrees since these properties hold for the stack $[\mathrm{pt}/GL_n(\C)]$.

It is clear that $\pi$ is a proper $G$-equivariant resolution of singularities, and thus is stratified for the stratification into $G$-orbits. It is even because every fibre of $\pi$ is a product of projective spaces.
\end{proof}

We now come to the proof of Theorem \ref{main}. \begin{proof}
Let $n=(q-1)(l+2)+ql$ be the common dimension of $Y$ and $\widetilde{Y}$. 
%Note that $\pi\map{\widetilde{Y}}{Y}$ is $G$-equivariant.
Decompose $\pi_*\underline{\Z}_p[n]$ into indecomposables
\[
\pi_*\underline{\Z}_p[n]\cong \bigoplus_t \P_t^{n_t}.
\]
The endomorphism ring of $\pi_*\underline{\Z}_p[n]$ is $\End(\pi_*\underline{\Z}_p[n])\cong H_{2n}^{BM}(Z;\Z_p)$, which governs the decomposition into irreducibles.

The spectral sequence with $E_2^{p,q}=H^p(BG;H_{-q}^{BM}(Z))$ converges to the $G$-equivariant Borel-Moore homology $H_*^G(Z)$. This spectral sequence is concentrated in even degrees by Lemma \ref{lem:even} and hence degenerates at the $E_2$ page. Therefore $H_{2n}^G(Z)$ surjects onto $H_{2n}^{BM}(Z)$.
As a consequence, each $\E_t$ is the deequivariantisation of an indecomposable $G$-equivariant sheaf, which is an equivariant parity sheaf by Lemma \ref{yparity}.

Again by Lemma \ref{lem:even}, the convolution algebra $H_{2n}^{BM}(Z;\Z_p)$ surjects onto $H_{2n}^{BM}(Z;\F_p)$. Therefore 
each $\P_t\otimes \F_p$ is also indecomposable.

Let $i$ be the inclusion of $\{0\}$ in $Y$.
By Proposition \ref{pairing} and Lemma \ref{rank}, the multiplicity of $i_*\uk[l-2]$ in $\pi_*\uk[n]$ is $q-l+1$ when $k=\Q_p$ and $q-l$ when $k=\F_p$. Therefore there exists a unique $t$ such that
\begin{equation}\label{multiplicity}
m(i_*\underline{\Q}_p[l-2],\P_t\otimes \Q_p)=1
\end{equation}
and $\E_t$ is not a skyscraper sheaf at 0.

This setup is symmetric under the action of the symmetric group $S_l$ which permutes the indices $\{1,2,\ldots,l\}$ of elements of $Y$. So by uniqueness of $t$, $\P_t$ is $S_l$-invariant.
From the classification of equivariant parity sheaves in \cite{parity}, $\E_t$ is up to homological shift the parity extension of a constant sheaf on a $S_l$-invariant $G$-orbit on $Y$.

%
%The morphism $\pi$ is stratified with respect to the stratification of $Y$ into $GL_{q}(\C)^{l+2}$-orbits.
%Here, the group $GL_{q}(\C)^{l+2}$ acts on $Y$ by 
%\[\begin{split}
%(g_0,g_1,\ldots,g_{l+1})\cdot (A_1,A_2,\ldots,A_l,B_1,B_2,\ldots,B_l)= \\(g_1A_1g_0\inv,g_2A_2g_0\inv,\ldots g_lA_lg_0\inv,g_{l+1}B_1Jg_1\inv J,g_{l+1}B_2Jg_2\inv J\ldots g_{l+1}B_lJg_l\inv J) \end{split}
%\]
%and there are finitely many orbits under this action, each of which is equivariantly simply-connected.

%Since the rank drops by 1 when passing from $\Q$ to $\F_p$, there is thus a contribution to $^pH^{k-2}$ unless this is absorbed by an intermediate strata. The resolution $\Y\to Y$ is $S_k$-invariant. Since the rank only drops by one, if any intermediate stratum is responsible, this stratum must be $S_k$-invariant.

There are only two $S_l$-invariant intermediate $G$-orbits in $Y$. They are $Y_{\{1,2,\ldots,l\},\emptyset}$ and $Y_{\emptyset,\{1,2,\ldots,l\}}$ in the notation of (\ref{def:yij}). 

Let $W$ be the closure of $Y_{\{1,2,\ldots,l\},\emptyset}$.
Let us assume that $\E_t$ has support $W$. 
Define
\[
\widetilde{W}=\{(\ell,B_1,B_2,\ldots B_l)\mid \ell\in Gr(1,q), B_i\in \Hom(\C^q,\ell)\}.
\]

Write $\sigma$ for the map from $\widetilde{W}$ to ${W}$. This is a $G$-equivariant even resolution of singularities. Therefore, in the $G$-equivariant derived category, $\E_t$ must appear as a direct summand of $\sigma_*\underline{\Z}_p$ (up to homological shift) as it is the unique indecomposable $G$-equivariant parity sheaf extending the constant sheaf and thus the same statement holds when we forget the equivariant structure.

The space $\widetilde{W}$ is the total space of a vector bundle $\mathcal{F}\cong \mathcal{O}(1)^{\oplus lq}$ over $\mathbb{P}^{q-1}$, where the zero section is the fibre over $0\in Y$. The Euler class $e(\mathcal{F})$ is a power of the Chern class $c_1(\mathcal{O}(1))$.
In computing the intersection form (\ref{tool}) for the resolution $\sigma$, once irrelevant rows are deleted, one is left with the antidiagonal matrix $J$, which has the same rank over $\Q$ and $\F_p$. 

Since the rank doesn't change, we deduce that
\(
m(i_*\underline{\Q} _p[l-2],\P_t\otimes \Q_p)=0
\), contradicting (\ref{multiplicity}). Therefore $\E_t$ cannot have support equal to $W$.

Similarly $\E_t$ cannot have support equal to the closure of the other $S_l$-equivariant intermediate stratum $Y_{\emptyset,\{1,2,\ldots,l\}}$.

%us the intermediate strata cannot be responsible for the drop in rank computed by Lemma \ref{rank}, completing the proof of Theorem \ref{main}.
Therefore $\P_t$ is an extension of the shifted constant sheaf on the open stratum in $Y$. As the stalk of $\P_t$ at 0 is free over $\Z_p$ and satisfies a parity vanishing property, it is nonzero in degree $l-2$ using (\ref{multiplicity}). Therefore $\P_t\otimes \F_p$ has nonzero stalk cohomology at 0 in degree $l-2$. Thus
\[
\P_t\otimes \F_p \not\cong 
{^p\tau}_{\leq l-3}(\P_t\otimes \F_p).
\]
Since $\E_t\otimes \F_p$ is an indecomposable extension of the shifted constant sheaf on the open stratum on $Y$ and occurs as a summand of $\pi_*\underline{\F_p}[n]$, by Theorem \ref{uniqueness}, $\E_t\otimes \F_p\cong \E(Y;\F_p)$. By the reduction to the slice argument made in the previous section, this completes the proof of Theorem \ref{main}.
\end{proof}

\end{document}